%
%
%
\documentclass[reqno,11pt]{amsart}
\usepackage{epsfig,amscd,amssymb,amsmath,amsfonts}
\usepackage{amsmath}
\usepackage{graphicx}

\newtheorem{theorem}{Theorem}[section]
\newtheorem{lemma}[theorem]{Lemma}
\newtheorem{proposition}{Proposition}[section]
\theoremstyle{definition}
\newtheorem{definition}[theorem]{Definition}

\newtheorem{conjecture}[theorem]{Conjecture}

\newtheorem{example}[theorem]{Example}

\theoremstyle{remark}
\newtheorem{remark}[theorem]{Remark}

\numberwithin{equation}{section}


\usepackage[margin=1.05in]{geometry}
\usepackage[colorlinks]{hyperref}
\usepackage{siunitx}

\makeatletter
\let\@wraptoccontribs\wraptoccontribs
\makeatother


\begin{document}

\title[The Exterior Graded Swiss-Cheese Operad ${\Lambda}^{S^2}_V$]{The Exterior Graded Swiss-Cheese Operad ${\Lambda}^{S^2}_V$\\ {\scriptsize (with an appendix by Ana Lorena Gherman and Mihai D. Staic)} }

\author{Mihai D. Staic}
\address{
Department of Mathematics and Statistics, Bowling Green State University, Bowling Green, OH 43403 }
\address{Institute of Mathematics of the Romanian Academy, PO.BOX 1-764, RO-70700 Bu\-cha\-rest, Romania.}

\email{mstaic@bgsu.edu}


\subjclass[2010]{Primary  15A75, Secondary  18D50}


\keywords{swiss-cheese operad, exterior algebra}

\begin{abstract} In this paper we present a construction which is a generalization of the exterior algebra of a vector space $V$. We show how this fits in the language of operads, discuss some properties, and  give explicit computations for the case $dim(V)=2$. 

\end{abstract}

\maketitle


%

\section*{Introduction}

Exterior algebra of a free module plays an important role in several areas of mathematics. From  determinants to  differential forms, from Hochschild homology to grassmannian, 
the exterior algebra pops up at the center of several seemingly different constructions. For an account of its connections with combinatorics, topology, Lie theory, mathematical physics, and algebraic geometry one can check   \cite{f}.  

The starting point of this paper was the connection between exterior algebra and  Hochschild homology. It is well known that Hochschild homology is modeled by the simplicial structure of the sphere $S^1$ (see \cite{p}). There is also the Hochschild-Kostant-Rosenberg theorem which gives a description for $HH_{\bullet}(A)$ in terms of the exterior algebra of module of differential $\Omega_{A/k}$ (see \cite{hkr}, \cite{lo}). So, in a certain way, one can think about the exterior algebra as being associated to the sphere $S^1$. This statement is a little bit misleading, since there is no explicit construction that starts with a simplicial structure of $S^1$ and a vector space $V$ as input, and produces the exterior algebra of $V$ as output. However this analogy was the starting point of our construction.    

In this paper we introduce a construction that mimics the exterior algebra, but instead of using the sphere $S^1$ as a model we use the sphere $S^2$. This was inspired by work of Pirashvili  on Higher Hochschild homology \cite{p}, Voronov  on Swiss-Cheese Operad \cite{vo}, and the explicit description of $H^{S^2}_{\bullet}(A,A)$ from \cite{cs} and \cite{la}. 

In the first section we recall  the insertion non-symmetric operad structure on the tensor algebra and exterior algebra. In the second section we introduce  graded Swiss-Cheese (GSD) operads. These are going to give the  right setting to formulate the main construction of this paper.  GSD operads are not really operads, they are somehow similar with Voronov's swiss-cheese operads but with  an extra degree of rigidity. 
As a first interesting example we construct the tensor GSC-operad $T^{S^2}_V$, which is the analog of the tensor algebra. The main result is in the third section where we introduced the exterior GSC-operad  ${\Lambda}^{S^2}_V$. We  state the universality property for this exterior GSC-operad, and show that ${\Lambda}^{S^2}_V(m)=0$ if $m>2dim(V)+1$. Then we give an explicit computation for $dim(V)=2$, in particular we show the existence of a determinant like function for the case $dim(V)=2$.    In the last section we make the  conjecture that ${\Lambda}^{S^2}_V(2dim(V)+1)=1$ for any $V$. Among other things, this would give a determinant like function for vector spaces of finite dimension, and it would open several possible directions of research. 

In the appendix we present some numerical computations for the case $dim(V)=3$. These results were obtained in collaboration with Ana Lorena Gherman.

\section{Preliminaries}

\subsection{Operads}

In this paper $k$ is a field, and $\otimes=\otimes_k$. We recall from \cite{mss} the definition of a non-symmetric operad. The general context in which operads are defined is that of a monoidal category $\mathcal{C}$ (see \cite{lo2}, \cite{mss}). In this paper the category $\mathcal{C}$  will be the category of vector spaces over $k$.   
\begin{definition}\label{opd} A non-symmetric unital operad is a sequence of objects ${\mathcal P}=\{{\mathcal P}(n)\}_{n\geq 1}\in k-mod$ together with linear maps
$$\circ_i:{\mathcal P}(m)\otimes {\mathcal P}(n)\to {\mathcal P}({m+n-1}),$$
one for each $m$, $n\geq 1$, each $1\leq i\leq m$, and a distinguished element $\mathbf{1}\in {\mathcal P}(1)$ such that the following relations hold for all $x\in {\mathcal P}(m)$, $y\in {\mathcal P}(n)$ and $z\in {\mathcal P}(p)$
\begin{eqnarray}
&(x\circ_j z)\circ_i y=(x\circ_i y)\circ_{n+j-1}z,\; {\rm if} \; 1\leq i<j\leq m,&\\
&(x\circ_i y)\circ_{i+j-1}z=x\circ_i(y\circ_j z),\; {\rm if} \; 1\leq i\leq m\; {\rm and} \; 1\leq j\leq n,&\\
&x\circ_i \mathbf{1}= x, \; {\rm if} \; 1\leq i\leq m, &\\
&\mathbf{1}\circ_1 x=x.&
\label{operad}
\end{eqnarray}
\end{definition}

\begin{definition} Let ${\mathcal P}=({\mathcal P}(n), \circ_i, 1)$ be an operad, and ${\mathcal I}=\{{\mathcal I}(n)\}_{n\geq 1}$ a collection of $k$-subspace of $\{{\mathcal P}(n)\}_{n\geq 1}$. We say that ${\mathcal I}$ is a ideal in ${\mathcal P}$ if $x\circ_i a\in {\mathcal I}(m+n-1)$, and $a\circ_j x\in {\mathcal I}(m+n-1)$ for all $x\in {\mathcal P}(n)$, $a\in {\mathcal I}(m)$, $1\leq i\leq n$, and $1\leq j\leq m$. 

\end{definition}

\subsection{The Operad Structure on $T(V)$}

Take $V$ a $k$-vector space. We denote by $T(V)$ the tensor algebra $T(V)=k\oplus V\oplus V^{\otimes 2} \oplus V^{\otimes 3}\oplus ...$
with the usual product. We have an non-symmetric operad structure on $T(V)$ as follows. 
\begin{example} \label{ex1}
For all $n\geq 1$ we take  $P(n)=V^{\otimes n-1}$. For $m,n \in \mathbb{N}$,  and for all $1\leq i\leq m$ we define $\circ_i:P(m)\otimes P(n)\to P(m+n-1)$ determined by 
\begin{eqnarray}
(v_1\otimes ...\otimes v_{m-1})\circ_i (w_1\otimes ...\otimes w_{n-1})=v_1\otimes ...\otimes v_{i-1}\otimes w_1\otimes ...\otimes w_{n-1}\otimes v_{i}\otimes ...\otimes v_{m-1}.
\end{eqnarray}
With the distinguished element $1\in k=P(1)$. One can easily check that the conditions from Definition \ref{operad} are satisfied, and so we get an operad structure on $T(V)$. Notice that the grading is shifted by $1$ compared with the usual grading of $T(V)$. 
\end{example}

\begin{example}
Take $I$ to be the ideal in $T(V)$ generated by $u\otimes v-v\otimes u$ for all $u, v\in V$. One can check that $I$ is an ideal in the operad $P(n)$ from Example \ref{ex1}. In particular we have an operad structure on the symmetric algebra $S(V)$.  Notice that in $S(V)$ the operation $\circ_i$ is nothing but the usual multiplication in $S(V)$.  
\end{example}

\begin{example} 
Take  $J$ to be the ideal in $T(V)$ generated by $u\otimes v+v\otimes u$ for all $u,v\in V$. One can check that $J$ is an ideal in the operad $P(n)$ from Example \ref{ex1}. In particular we have an operad structure on the exterior algebra $\Lambda(V)$.  In $\Lambda(V)$ the operation $\circ_i$ is 
\begin{eqnarray}
(v_1\wedge ...\wedge v_{m-1})\circ_i (w_1\wedge ...\wedge w_{n-1})=(-1)^{(n-1)(m-i)} (v_1\wedge ...\wedge v_{m-1}\wedge w_1\wedge ...\wedge w_{n-1}).
\end{eqnarray}\label{example3}
\end{example}

\section{Graded Swiss-Cheese Operads}

\subsection{GSC-Operads}
In this section we introduce GSD operads, which will provide the correct setting to generalize the exterior algebra of a vector space.  The  definition  is inspired by  Voronov's  Swiss-Cheese operad (\cite{vo}), results on higher Hochschild cohomology over $S^2$ (\cite{cs}, \cite{la}), and secondary Hochschild cohomology (\cite{sta}, \cite{s3}). First we need to define what we mean by a bioperad. 

\begin{definition} Suppose that for all $n, m\geq 1$ we have a vector space ${\mathcal B}(m,n)$ such that for every fixed $n$ there is an operad $({\mathcal B}(m,n), \circ^n_i)$, and for every fixed $m$ there is an operad $({\mathcal B}(m,n), \bullet^m_i)$, where $$\circ^n_i: {\mathcal B}(m,n) \otimes {\mathcal B}(p,n) \to {\mathcal B}(m+p-1,n),$$ 
 $$\bullet^m_i: {\mathcal B}(m,n)\otimes {\mathcal B}(m,q) \to {\mathcal B}(m,n+q-1).$$  
We say that $({\mathcal B}(m,n),\circ^n_i, \bullet^m_i)$ is a bioperad if for all $A\in {\mathcal B}(m,n)$, $B\in {\mathcal B}(m,q)$, $C\in {\mathcal B}(p,n)$, and  $D\in {\mathcal B}(p,q)$ we have 
$$(A\circ^n_i C)\bullet^{m+p}_j(B\circ^q_i D)= (A\bullet^m_j B)\circ^{n+q}_i(C\bullet^p_j D).$$
We say that $({\mathcal B}(m,n),\circ^n_i, \bullet^m_i)$ is symmetric if for every $m$, $n$ there exists a map $tr: {\mathcal B}(m,n) \to {\mathcal B}(n,m)$ such that $$(A^{tr})^{tr}=A$$ and 
$$(A\circ^n_i C)^{tr}=A^{tr}\bullet^n_i C^{tr}$$
for all $A\in {\mathcal B}(m,n)$, and $C\in {\mathcal B}(p,n)$. 
\end{definition}

In what follows it will be convenient to use the upper triangular tensor notation for the the tensor product of three vector spaces $U$, $V$ and $W$ namely $$\otimes\begin{pmatrix} 
U & W\\
 & V 
\end{pmatrix}.$$ 
See \cite{sta} and \cite{s3} for similar situations where this notation is very useful to keep track of certain operations.

\begin{definition} Let $({\mathcal B}(m,n),\circ^n_i, \bullet^m_i)$ be a symmetric bioperad. Suppose that for all $m\geq 1$ we have a vector space ${\mathcal T}(m)$, and for $m, n\geq 1$ and all $1\leq i\leq m$ we have an operation 
$$\diamond_i:\otimes\begin{pmatrix} 
{\mathcal T}(m) & {\mathcal B}(m,n)\\
 & {\mathcal T}(n) 
\end{pmatrix}\to {\mathcal T}(m+n-1),$$ where for $x\in {\mathcal T}(m)$, $y\in {\mathcal T}(n)$  and $A\in {\mathcal B}(m,n)$ we denote the evaluation by  $\begin{pmatrix} 
x & A\\
 \diamond_i& y 
\end{pmatrix}\in {\mathcal T}(m+n-1).$ 

We say that $({\mathcal T}(m), \diamond_i)$ is a Graded Swiss-Cheese (GSC) operad over $({\mathcal B}(m,n),\circ^n_i, \bullet^m_i)$ if for every $x\in {\mathcal T}(m)$, $y\in {\mathcal T}(n)$, $z\in {\mathcal T}(p)$, $A\in {\mathcal B}(m,n)$, $B\in {\mathcal B}(m,p)$,  and $C\in {\mathcal B}(n,p)$ we have:

$$\begin{pmatrix} 
\begin{pmatrix} 
x & B\\
 \diamond_j& z
\end{pmatrix} & A\circ^n_j C^{tr}\\
 \diamond_i& y 
\end{pmatrix}= \begin{pmatrix} 
\begin{pmatrix} 
x & A\\
 \diamond_i& y 
\end{pmatrix} & B\circ^{p}_{i} C\\
 \diamond_{j+n-1}& z 
\end{pmatrix} {\rm ~for} ~1\leq i<j\leq m,    $$
\\

$$\begin{pmatrix} 
\begin{pmatrix} 
x & A\\
 \diamond_i& y 
\end{pmatrix} & B\circ^p_i C\\
 \diamond_{j+i-1}& z 
\end{pmatrix}= 
\begin{pmatrix} 
 x& A\bullet^{m}_{j} B\\
 \diamond_{i}& \begin{pmatrix} 
y & C\\
 \diamond_{j}& z 
\end{pmatrix}
\end{pmatrix} {\rm ~for} ~1\leq i\leq m, ~ 1\leq j \leq n,  $$
Moreover, there exists an element ${\bf 1} \in {\mathcal T}(1)$ such that 
$$\begin{pmatrix} 
x & 1_{n,0}\\
 \diamond_i& {\bf 1}
\end{pmatrix}=x {\rm ~for} ~1\leq i\leq m,$$

$$\begin{pmatrix} 
{\bf 1} & 1_{0,n}\\
 \diamond_1& x
\end{pmatrix}=x.$$

\end{definition} 

\begin{example}
Any non symmetric operad $({\mathcal B}, \diamond_i)$ is a GSC operad, if we take ${\mathcal B}(m,n)=k$ for all $m$ and $n$, with the trivial bi-operad structure.  
\end{example}

\begin{definition} Let $({\mathcal T}(m), \diamond_i)$ be a GSC operad over $({\mathcal B}(m,n),\circ^n_i, \bullet^m_i)$, and $\mathcal{I}(m)$ subspaces in ${\mathcal T}(m)$ for all $m\geq 1$. We say that $\{\mathcal{I}(m)\}_{m\geq 1}$ is an ideal of ${\mathcal T}$ if for every $x\in \mathcal{I}(m)$, $y\in {\mathcal T}(n)$, and $A\in {\mathcal B}(m,n)$ we have $\begin{pmatrix} 
x & A\\
 \diamond_i& y 
\end{pmatrix}\in {\mathcal I}(m+n-1)$ for all $1\leq i\leq m$,  and $\begin{pmatrix} 
y & A^{tr}\\
 \diamond_i& x 
\end{pmatrix}\in {\mathcal I}(m+n-1)$ for all $1\leq i\leq n$.  
\end{definition}

\begin{remark} Let $({\mathcal T}(m), \diamond_i)$ be a GSC operad over $({\mathcal B}(m,n),\circ^n_i, \bullet^m_i)$, and  $\{\mathcal{I}(m)\}_{m\geq 1}$ an ideal of ${\mathcal T}$. Then one can define the quotient GSC operad ${\mathcal T}/{\mathcal I}$. The conditions in the definition of an ideal imply  that all the maps are well defined. The other identities follow because  ${\mathcal T}$ is a GSC operad. Just like for algebras, one can talk about the ideal generated by a subset.
\end{remark}

\subsection{The tensor GSC operad $T^{S^2}_V$}

Next we  give the first interesting example of a GSC-operad structure. One can think that this example is modeled by the sphere $S^2$, just like the operad in Example \ref{ex1} is modeled to the sphere $S^1$.

First we  introduce a bioperad associated to a space $V$. This will be a direct  generalization of Example \ref{ex1}. In order to keep track of the bi-grading we will use the tensor matrix notations introduced in \cite{cs}. 

 Let $V$ be a vector space. For all $m, n\geq 1$ we define ${\mathcal B}_V(m,n)= V^{\otimes (m-1)(n-1)}$. A generic simple tensor in ${\mathcal B}_V(m,n)$ is a tensor matrix with $m-1$ rows and $(n-1)$ columns. 
$$A=\otimes_{k,l} (u_{k,l})=\otimes \begin{pmatrix} 
u_{1,1}&u_{1,2}&...&u_{1,n-1}\\
u_{2,1}&u_{2,2}&...&u_{2,n-1}\\
.&.&...&.\\
u_{m-1,1}&u_{m-1,2}&...&u_{m-1,n-1}\\
\end{pmatrix}\in {\mathcal B}_V(m,n),$$
where $u_{k,l}\in V$. A general element in in ${\mathcal B}_V(m,n)$ is a sum of such simple tensors.  

We need to define $\circ^n_i: {\mathcal B}_V(m,n) \otimes {\mathcal B}_V(p,n) \to {\mathcal B}_V(m+p-1,n),$  and $\bullet^m_i: {\mathcal B}_V(m,n)\otimes {\mathcal B}_V(m,q) \to {\mathcal B}_V(m,n+q-1).$ For 
$A=\otimes_{k,l} (u_{k,l})\in {\mathcal B}_V(m,n)$,
 $C=\otimes_{k,l} (v_{k,l})\in {\mathcal B}_V(p,n)$,  and $1\leq i\leq m$ we define 
$$A\circ_i^n C=\otimes \begin{pmatrix} 
u_{1,1}&u_{1,2}&...&u_{1,n-1}\\
.&.&...&.\\
u_{i-1,1}&u_{i-1,2}&...&u_{i-1,n-1}\\
v_{1,1}&v_{1,2}&...&v_{1,n-1}\\
.&.&...&.\\
v_{p,1}&v_{p,2}&...&v_{p,n-1}\\
u_{i,1}&u_{i,2}&...&u_{i,n-1}\\
.&.&...&.\\
u_{m-1,1}&u_{m-1,2}&...&u_{m-1,n-1}\\
\end{pmatrix}\in {\mathcal B}_V(m+p-1,n).$$
Next for $A=\otimes_{k,l} (u_{k,l})\in {\mathcal B}_V(m,n)$, $B=\otimes_{k,l} (w_{k,l})\in {\mathcal B}_V(m,q)$,  and $1\leq i\leq n$ we define 
$$A\bullet_i^m B=\otimes \begin{pmatrix} 
u_{1,1}&...&u_{1,i-1}&w_{1,1}&...&w_{1,q-1}&u_{1,i}&...&u_{1,n-1}\\
u_{2,1}&...&u_{2,i-1}&w_{2,1}&...&w_{2,q-1}&u_{2,i}&...&u_{2,n-1}\\
.&...&.&.&...&.&.&...&.\\
u_{m-1,1}&...&u_{m-1,i-1}&w_{m-1,1}&...&w_{m-1,q-1}&u_{m-1,i}&...&u_{m-1,n-1}\\
\end{pmatrix}\in {\mathcal B}_V(m,n+q-1).$$

Finally   for $A=\otimes_{k,l} (u_{k,l})\in {\mathcal B}_V(m,n)$ we define  $tr(A)=tr(\otimes_{k,l} (u_{k,l}))=\otimes_{l,k} (u_{l,k})\in {\mathcal B}_V(n,m)$ (i.e. the transpose of the tensor matrix).

\begin{example}  The above structures define  a bioperad denoted $({\mathcal B}_V(m,n), \circ_i^n, \bullet_i^m)$.  
\end{example}

Let $V$ be a $k$-vector space. For $n\geq 0$ we take 
$${\mathcal T}^{S^2}_V(n+1)=V^{\otimes \frac{n(n-1)}{2}}=\otimes\begin{pmatrix} 
k & V & \cdots & V&V\\
 & k & \cdots & V&V\\
 & &\ddots & \vdots & \vdots\\
 & & & k & V\\
 & & & & k\end{pmatrix},$$
where the tensor matrix is of dimension $n\times n$. Notice that when $n=0$, or $1$ we get a one dimensional $k$-vector space. To distinguish between these two cases we use the convention that ${\mathcal T}^{S^2}_V(1)=k{\bf 1} $ and   ${\mathcal T}^{S^2}_V(2)=k(1)$. 
For $m\geq 3$ a generic simple tensor in ${\mathcal T}^{S^2}_V(m)$ is a  tensor upper triangular matrix 
$$x=\otimes_{k<l} (u_{k,l})=\otimes\begin{pmatrix} 
1 & u_{1,2} &u_{1,3}& \cdots & u_{1,m-2}&u_{1,m-1}\\
 & 1 & u_{2,3}&\cdots & u_{2,m-2}&u_{2,m-1}\\
 &  & 1&\cdots & u_{3,m-2}&u_{3,m-1}\\
 & &&\ddots & \vdots & \vdots\\
 & & && 1 & u_{m-2,m-1}\\
 & & && & 1\end{pmatrix}.$$
A general element in ${\mathcal T}^{S^2}_V(m)$ is a sum of such simple tensors. 

For $1\leq i\leq m$ we define  $\diamond_i:\otimes\begin{pmatrix} 
{\mathcal T}^{S^2}_V(m) & {\mathcal B}_V(m,n)\\
 & {\mathcal T}^{S^2}_V(n) 
\end{pmatrix}\to {\mathcal T}^{S^2}_V(m+n-1),$
as follows. If $x=\otimes_{k<l} (u_{k,l})\in {\mathcal T}^{S^2}_V(m)$ , $y=\otimes_{k<l} (v_{k,l})\in {\mathcal T}^{S^2}_V(n)$, and $A=\otimes_{k,l} (w_{k,l})\in {\mathcal B}_V(m,n)$ then 
\setcounter{MaxMatrixCols}{20}
$$\begin{pmatrix} 
x & A\\
 \diamond_i& y 
\end{pmatrix}=\otimes\begin{pmatrix}
1 & u_{1,2} &\cdots&u_{1,i-1}&w_{1,1}&w_{1,2}&\cdots &
w_{1,n-1}&u_{1,i}& \cdots & u_{1,m-1}\\
 & 1 &\cdots&u_{2,i-1}&w_{2,1}&w_{2,2}&\cdots &w_{2,n-1}&u_{2,i}&\cdots &u_{2,m-1}\\
 &&  &.&.&.&\cdots &.&.&\cdots &.\\
 & & & u_{i-2,i-1}&w_{i-2,1}&w_{i-2,2}&\cdots &w_{i-2,n-1}&u_{i-2,i}&\cdots &u_{i-2,m-1}\\
 & & & 1&w_{i-1,1}&w_{i-1,2}&\cdots &w_{i-1,n-1}&u_{i-1,i}&\cdots &u_{i-1,m-1}\\
 &  & & &1&v_{1,2}&\cdots &v_{1,n-1}&w_{i,1}&\cdots &w_{m-1,1}\\
 &  & & &&1&\cdots &v_{2,n-1}&w_{i,2}&\cdots &w_{m-1,2}\\
&  & & & & &\cdots &.&. &\cdots &.\\
&  & & & & & &v_{n-2,n-1}&w_{i,n-2}&\cdots &w_{m-1,n-2}\\
&  & & & & & &1&w_{i,n-1}&\cdots &w_{m-1,n-1}\\
 &  & & & & & &  &1&\cdots &u_{i,m-1}\\
  &  & & & & & & & &.&.\\
  &  & & & & & & &  &1&u_{m-1,m-1}\\
	  &  & & & & & & & & &1
 \end{pmatrix}.  $$
Or if we use the compact representation for the tensor matrices with $x=\otimes\begin{pmatrix}
 X_{1}^{i-1}& X_{1,i}^{i-1,m-i}\\
 &X_{i}^{m-i}
 \end{pmatrix} \in {\mathcal T}^{S^2}_V(m)$, $y\in {\mathcal T}^{S^2}_V(n)$, and $A=\otimes\begin{pmatrix}
 A_{1,1}^{i-1,n-1} \\
A_{i,1}^{m-i, n-1}
 \end{pmatrix}\in {\mathcal B}_V(m,n)$, then 
$$\begin{pmatrix} 
x & A\\
 \diamond_i& y 
\end{pmatrix}=
\otimes\begin{pmatrix}
 X_{1}^{i-1}&A_{1,1}^{i-1,n-1}& X_{1,i}^{i-1,m-i}\\
&y& tr(A_{i,1}^{m-i, n-1})\\
 &&X_{i}^{m-i}
 \end{pmatrix}.$$
To summarize we have the following result.
\begin{example}
$({\mathcal T}^{S^2}_V(m), \diamond_i)$ is a GSC operad over $({\mathcal B}_V(m,n), \circ_i^n, \bullet_i^m)$. \label{ts2}
\end{example} 

\begin{remark} Starting with a pair of vector spaces $U$ and $V$,  one can define a GSC operad by taking 
$${\mathcal T}^{(S^1,D_2)}_{U,V}(n+1)=U^{\otimes n}\otimes V^{\otimes \frac{n(n-1)}{2}}=\otimes\begin{pmatrix} 
U & V & \cdots & V&V\\
 & U & \cdots & V&V\\
 & &\ddots & \vdots & \vdots\\
 & & & U & V\\
 & & & & U\end{pmatrix},$$
where the tensor matrix is of dimension $n\times n$. The $\diamond_i$ operations are similar with the ones from Example \ref{ts2} (see also \cite{s3}).  
Since in this paper we will not explore this example, we leave the details to the reader. For the simplicial model that justifies  the notation ${\mathcal T}^{(S^1,D_2)}_{U,V}$ see \cite{bm}. 
\end{remark}

\section{The Exterior GSC-operad ${\Lambda}^{S^2}_V$}
In this section  we introduce GSC operad that behave  a lot like the exterior algebra $\Lambda(V)$. We study some of its properties, and give explicit computations when $dim(V)=2$. 

\begin{definition} Let $V$ be a vector space, and $({\mathcal T}^{S^2}_V(m), \diamond_i)$ GSC-operad from Example \ref{ts2}. Consider the ideal ${\mathcal E}^{S^2}_V$ generated by the elements $\otimes \begin{pmatrix} 
1& v&v\\
&1&v\\
& &1
\end{pmatrix}$ for all $v\in V$. Define the exterior GSC operad ${\Lambda}^{S^2}_V$ over $({\mathcal B}_V(m,n), \circ_i^n, \bullet_i^m)$ as the quotient 
${\mathcal T}^{S^2}_V/{\mathcal E}^{S^2}_V$. 
\end{definition} 
\begin{remark}
Notice that ${\Lambda}^{S^2}_V(m)=0$ if $m>R(3,3,...,3)$ (where $R(3,3,...,3)$ is the multicolor Ramsey number with $dim(V)$ colors). 
\end{remark}

To  distinguish among elements in $V^{\frac{n(n-1)}{2}}$, ${\mathcal T}^{S^2}_V(n+1)$ and ${\Lambda}^{S^2}_V(n+1)$ we will use the following notations respectively 
$$\begin{pmatrix} 
0& v_{1,2}&...&v_{1,n-1}&v_{1,n}\\
& 0&...&v_{2,n-1}&v_{2,n}\\
& &...&.&.\\
& & &0&v_{n-1,n}\\
& & & &0
\end{pmatrix}\in V^{\frac{n(n-1)}{2}},$$

$$\otimes \begin{pmatrix} 
1& v_{1,2}&...&v_{1,n-1}&v_{1,n}\\
& 1&...&v_{2,n-1}&v_{2,n}\\
& &...&.&.\\
& & &1&v_{n-1,n}\\
& & & &1
\end{pmatrix}\in {\mathcal T}^{S^2}_V(n+1),$$ 

and $$\begin{pmatrix} 
1& v_{1,2}&...&v_{1,n-1}&v_{1,n}\\
& 1&...&v_{2,n-1}&v_{2,n}\\
& &...&.&.\\
& & &1&v_{n-1,n}\\
\wedge& & & &1
\end{pmatrix}\in {\Lambda}^{S^2}_V(n+1).$$

\begin{definition} Let $V$ and $W$ be two vector spaces, and $\phi:V^{\frac{n(n-1)}{2}}\to W$ a $k$-multilinear map. We say that $\phi$ is $2$-alternating if 
$\phi\begin{pmatrix} 
0& v_{1,2}&...&v_{1,n-1}&v_{1,n}\\
& 0&...&v_{2,n-1}&v_{2,n}\\
& &...&.&.\\
& & &0&v_{n-1,n}\\
& & & &0
\end{pmatrix}=0$ whenever $v_{i,j}=v_{i,k}=v_{j,k}$ for some $i<j<k$. 
\end{definition}
Just like for the exterior algebra we have the following universality property for the exterior operad ${\Lambda}^{S^2}_V$. 
\begin{proposition}
Let $V$ and $W$ be two vector spaces, and $\phi:V^{\frac{n(n-1)}{2}}\to W$ a $2$-alternating $k$-multilinear map. Then there exists an unique $k$-linear map $f: {\Lambda}^{S^2}_V(n+1)\to W$ such that $f\alpha=\phi$, where $\alpha$ is the natural map from $V^{\frac{n(n-1)}{2}}$ to ${\Lambda}^{S^2}_V(n+1)$.\label{2alt}
\end{proposition}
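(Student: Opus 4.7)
The plan is to use the standard strategy for quotient universal properties: lift $\phi$ through the tensor product, then descend through the quotient defining $\Lambda^{S^2}_V(n+1)$. Since $\mathcal{T}^{S^2}_V(n+1) = V^{\otimes n(n-1)/2}$ by definition and $\phi$ is $k$-multilinear, the universal property of the tensor product gives a unique $k$-linear map $\tilde{\phi}\colon \mathcal{T}^{S^2}_V(n+1) \to W$ sending the simple tensor $\otimes_{k<l}(v_{k,l})$ to $\phi(\ldots, v_{k,l}, \ldots)$. Obtaining $f$ amounts to showing that $\tilde{\phi}$ vanishes on $\mathcal{E}^{S^2}_V(n+1)$, so that $\tilde{\phi}$ descends through the quotient projection to the required map; uniqueness is automatic because the image of $\alpha$ consists of the classes of simple tensors, which span $\Lambda^{S^2}_V(n+1)$.

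The key step is to characterize, for every $m$, the subspace $\mathcal{E}^{S^2}_V(m) \subseteq \mathcal{T}^{S^2}_V(m)$ as the $k$-span $\mathcal{J}(m)$ of the simple tensors $\otimes_{k<l}(u_{k,l})$ admitting a \emph{$2$-alternating triple}: indices $1 \le a < b < c \le m-1$ with $u_{a,b} = u_{a,c} = u_{b,c}$. For the inclusion $\mathcal{E}^{S^2}_V \subseteq \mathcal{J}$, I verify that $\{\mathcal{J}(m)\}_m$ is itself an operadic ideal containing the generators $g_v = \otimes\begin{pmatrix} 1 & v & v \\ & 1 & v \\ & & 1 \end{pmatrix}$: inspection of the explicit formula for $\diamond_i$ shows that in both $x \diamond_i y$ and $y \diamond_i x$ the positions of the triple of the ideal-side argument merely shift according to the insertion rule, so the three corresponding entries remain equal. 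Granted this inclusion, the $2$-alternating hypothesis on $\phi$ forces $\tilde{\phi}|_{\mathcal{J}(n+1)} = 0$, hence $\tilde{\phi}|_{\mathcal{E}^{S^2}_V(n+1)} = 0$, so $\tilde{\phi}$ descends to the desired $f$.

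The main obstacle is the reverse inclusion $\mathcal{J} \subseteq \mathcal{E}^{S^2}_V$: every simple tensor with a $2$-alternating triple must be reachable from a single generator via operadic operations. This is done by an explicit construction. For a target simple tensor in $\mathcal{T}^{S^2}_V(N)$ with triple at $(a, b, c)$, I start from $g_v$ (whose triple sits at $(1,2,3)$) and perform the chain $g_v \diamond_4 y_4 \diamond_3 y_3 \diamond_2 y_2 \diamond_1 y_1$ with $y_j \in \mathcal{T}^{S^2}_V(n_j)$ and $(n_1, n_2, n_3, n_4) = (a, b-a, c-b, N-c)$; tracking the successive index shifts shows that the triple lands exactly at $(a, b, c)$, while the interior entries of the $y_j$'s together with the cross entries supplied by the four bioperad components $A_j$ cover every remaining off-diagonal position of the output. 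As a concrete illustration of the shifting mechanism, $g_v \diamond_3 (1)$ with $A = a_1 \otimes a_2 \otimes a_3 \in V^{\otimes 3}$ yields $\otimes\begin{pmatrix} 1 & v & a_1 & v \\ & 1 & a_2 & v \\ & & 1 & a_3 \\ & & & 1 \end{pmatrix} \in \mathcal{T}^{S^2}_V(5)$, shifting the triple from $(1,2,3)$ to $(1,2,4)$ while realizing arbitrary values for the three remaining off-diagonal entries; the full four-step chain is the iteration of this maneuver at all four insertion positions, and a direct check confirms that the union of the four cross-blocks plus the four interiors exhausts every non-triple off-diagonal slot.
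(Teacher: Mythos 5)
Your proof is correct and is the detailed version of what the paper compresses into ``follows directly from definitions'': lift $\phi$ to $\tilde\phi$ on the tensor power and check that $\tilde\phi$ kills ${\mathcal E}^{S^2}_V(n+1)$, which your key observation delivers, since the span ${\mathcal J}$ of simple tensors containing an equal triple $u_{a,b}=u_{a,c}=u_{b,c}$ is itself a GSC-ideal containing the generators, whence ${\mathcal E}^{S^2}_V(n+1)\subseteq {\mathcal J}(n+1)\subseteq \ker\tilde\phi$. Note, however, that the reverse inclusion ${\mathcal J}\subseteq {\mathcal E}^{S^2}_V$, which you call the main obstacle and which occupies most of your write-up, is not needed for either the existence or the uniqueness of $f$ and can be omitted.
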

\begin{proof}
It follows directly from definitions. 
\end{proof}

\begin{lemma} Assume that $char(k)$ is not $2$ or $3$, then the ideal ${\mathcal E}^{S^2}_V$ is generated by any of the following sets: \label{lemma3}\\
1) $\{ \otimes \begin{pmatrix} 
1& v&v\\
&1&v\\
& &1
\end{pmatrix}\vert {\rm ~ for ~ all} ~v\in V\}$.\\
2) $\{ \otimes\begin{pmatrix} 
1& v&v\\
&1&u\\
& &1
\end{pmatrix}+\otimes\begin{pmatrix} 
1& v&u\\
&1&v\\
& &1
\end{pmatrix}+\otimes\begin{pmatrix} 
1& u&v\\
&1&v\\
& &1
\end{pmatrix}\vert {\rm ~ for ~ all} ~u,~v\in V\}$.\\
3) $\{ \otimes\begin{pmatrix} 
1& u&v\\
&1&w\\
& &1
\end{pmatrix}+\otimes\begin{pmatrix} 
1& u&w\\
&1&v\\
& &1
\end{pmatrix}+\otimes\begin{pmatrix} 
1& v&u\\
&1&w\\
& &1
\end{pmatrix}+ \otimes\begin{pmatrix} 
1& w&u\\
&1&v\\
& &1
\end{pmatrix}+\otimes\begin{pmatrix} 
1& v&w\\
&1&u\\
& &1
\end{pmatrix}+\otimes\begin{pmatrix} 
1& w&v\\
&1&u\\
& &1
\end{pmatrix} \vert \\{\rm ~ for ~ all} ~u, ~v, ~w\in V\}$.\\
\end{lemma}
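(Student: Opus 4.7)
The plan is a standard cubic-polarization argument, carried out entirely inside the single vector space $\mathcal{T}^{S^2}_V(4)=V^{\otimes 3}$ where the three sets of generators sit. Let $I_j$ denote the ideal of $\mathcal{T}^{S^2}_V$ generated by set $(j)$, and for brevity write $[a,b,c]:=\otimes\begin{pmatrix}1 & a & b\\ & 1 & c\\ & & 1\end{pmatrix}$. Denote the three kinds of generators by $P(v)=[v,v,v]$, $Q(u,v)=[v,v,u]+[v,u,v]+[u,v,v]$, and $R(u,v,w)=\sum_{\sigma\in S_3}[\sigma(u),\sigma(v),\sigma(w)]$. Since every $P(w)$, $Q(u',v')$, $R(u',v',w')$ with $u',v',w'\in V$ is a generator of the corresponding $I_j$, and any $k$-linear combination of generators lies in the ideal, the whole claim reduces to writing the generators of each set as explicit $k$-linear combinations of the generators of another set.

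First I show $I_1=I_2$. Expanding $P(\lambda u+v)$ by trilinearity and grouping terms by the number of $u$'s gives
\[
P(\lambda u+v)=\lambda^{3}P(u)+\lambda^{2}Q(v,u)+\lambda\,Q(u,v)+P(v).
\]
Evaluating at $\lambda=1$ and $\lambda=-1$ and subtracting kills both $P(u),P(v)$ and $Q(v,u)$, yielding
\[
Q(u,v)=\tfrac{1}{2}\bigl(P(u+v)-P(v-u)\bigr)-P(u),
\]
so $Q(u,v)\in I_{1}$ provided $\mathrm{char}(k)\neq 2$. Conversely, $Q(v,v)=3P(v)$, so $P(v)=\tfrac{1}{3}Q(v,v)\in I_{2}$ provided $\mathrm{char}(k)\neq 3$.

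Next I show $I_2=I_3$. Expanding the two occurrences of $v+w$ in $Q(u,v+w)$ and separating the pure $v$-terms from the pure $w$-terms from the mixed terms gives directly
\[
Q(u,v+w)=Q(u,v)+Q(u,w)+R(u,v,w),
\]
so $R(u,v,w)\in I_{2}$. In the other direction, specializing $w=v$ in $R$ lists each of the three arrangements with two $v$'s and one $u$ exactly twice among the six permutations of $(u,v,v)$, giving $R(u,v,v)=2Q(u,v)$, hence $Q(u,v)=\tfrac{1}{2}R(u,v,v)\in I_{3}$, again needing $\mathrm{char}(k)\neq 2$.

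There is no real obstacle: the entire argument is bookkeeping of the $2^{3}$ and $3^{3}$ terms that appear in the expansions above, and the role of the characteristic hypothesis is transparent — we need to invert $2$ twice (to extract a degree-$(1,2)$ component from a degree-$3$ polynomial identity, and to extract $Q$ from $R(u,v,v)$) and $3$ once (to recover $P$ from $Q(v,v)$). No operadic composition or ideal closure beyond linear spans is required, because in each case the target generator is written as a linear combination of generators of the other set evaluated at vectors built from the given $u,v,w$ by $k$-linear operations.
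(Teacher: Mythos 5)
Your proof is correct and takes essentially the same approach as the paper, whose argument is only a one-line sketch (``straightforward computation,'' with the hint to expand the generator at $u+v+w$ by multilinearity). Your polarization identities, together with the specializations $Q(v,v)=3P(v)$ and $R(u,v,v)=2Q(u,v)$, all check out and make transparent exactly where $\mathrm{char}(k)\neq 2,3$ is used.
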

\begin{proof}
This is a straightforward computation. For example, to prove that the elements in $3)$ are in ${\mathcal E}^{S^2}_V$ consider the vector $\otimes \begin{pmatrix} 
1& u+v+w&u+v+w\\
&1&u+v+w\\
& &1
\end{pmatrix}\in {\mathcal E}^{S^2}_V$, and use the linearity of the tensor product. 
\end{proof}

\begin{lemma} Let $V$ be a vector space, $\omega \in V$, and $n\geq 3$. Take  $X=\begin{pmatrix} 
0& v_{1,2}&...&v_{1,n-1}&v_{1,n}\\
& 0&...&v_{2,n-1}&v_{2,n}\\
& &...&.&.\\
& & &0&v_{n-1,n}\\
& & & &0
\end{pmatrix}\in V^{\frac{n(n-1)}{2}}$ such that $\omega$ appears at least $n$ times amongst  the elements in the set $\{v_{i,j}\;\vert \; 1\leq i<j\leq n\}$. Then the image of $X$ in ${\Lambda}^{S^2}_V(n+1)$  is trivial, that is   $\hat{X}=\begin{pmatrix} 
1& v_{1,2}&...&v_{1,n-1}&v_{1,n}\\
& 1&...&v_{2,n-1}&v_{2,n}\\
& &...&.&.\\
& & &1&v_{n-1,n}\\
\wedge& & & &1
\end{pmatrix}=0\in {\Lambda}^{S^2}_V(n+1)$. \label{lemma5}
\end{lemma}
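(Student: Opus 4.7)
The plan is to view the positions of the entries equal to $\omega$ in $X$ as the edge set of a graph $G$ on the vertex set $[n]$, so that the hypothesis reads $\abs{E(G)}\ge n$, which forces $G$ to contain a cycle; let $k$ be the girth of $G$. I would then induct on $k$ (which is $\ge 3$).

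For the base case $k=3$, the graph $G$ contains a triangle at some vertices $a<b<c$ with $v_{a,b}=v_{a,c}=v_{b,c}=\omega$, and I would realise $X$ as an iterated $\diamond$-insertion of the ideal generator $g_\omega = \otimes\begin{pmatrix} 1 & \omega & \omega \\ & 1 & \omega \\ & & 1\end{pmatrix} \in \mathcal{E}^{S^2}_V(4)$. Successively applying $\diamond_3$ with a padding $y_{(b,c)}\in \mathcal{T}^{S^2}_V(c-b)$, then $\diamond_2$ with $y_{(a,b)}\in \mathcal{T}^{S^2}_V(b-a)$, then $\diamond_1$ with $y_{<a}\in \mathcal{T}^{S^2}_V(a)$, and finally $\diamond_{c+1}$ with $y_{>c}\in \mathcal{T}^{S^2}_V(n-c+1)$ (with arbitrary connecting tensors from $\mathcal{B}_V$ at each step) places the three rows of $g_\omega$ exactly at positions $a$, $b$, $c$ of the resulting $n\times n$ matrix. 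Varying the padding blocks and the connecting tensors realises every matrix in $\mathcal{T}^{S^2}_V(n+1)$ whose entries at $(a,b)$, $(a,c)$ and $(b,c)$ are all $\omega$, so by the ideal axiom for $\mathcal{E}^{S^2}_V$ under $\diamond$-insertions we get $X\in \mathcal{E}^{S^2}_V(n+1)$ and $\hat X=0$.

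For the inductive step $k\ge 4$, pick a shortest cycle $C=(u_1,\ldots,u_k)$ of $G$ and consider three consecutive vertices $u_1,u_2,u_3$ on $C$. Because $C$ is shortest, the chord $(u_1,u_3)$ is not an edge of $G$, and when we re-sort $\{u_1,u_2,u_3\}$ to $a<b<c$ the sub-triangle of $X$ at these positions has exactly two $\omega$-entries and one entry $u\in V$ on the chord. Lifting the relation $g_{\omega,\omega,u}+g_{\omega,u,\omega}+g_{u,\omega,\omega}=0$ from Lemma \ref{lemma3}(2) from $\mathcal{T}^{S^2}_V(4)$ to $\mathcal{T}^{S^2}_V(n+1)$ via the same chain of $\diamond$-insertions constructed in the base case (so that only the three sub-triangle entries of $X$ change) yields $\hat X = -\hat X' - \hat X''$, where in each of $X',X''$ exactly one of the cycle edges $(u_1,u_2)$ or $(u_2,u_3)$ has been swapped for the chord $(u_1,u_3)$. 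The number of $\omega$-entries is unchanged (hence still $\ge n$) and the original $k$-cycle has collapsed along the chord to a $(k-1)$-cycle plus a pendant, so each of $X'$ and $X''$ has girth at most $k-1$ and satisfies the inductive hypothesis; therefore $\hat X=0$.

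The main obstacle is the base case: one must verify that the iterated-insertion construction, as the four padding blocks and the four connecting tensors vary, really does produce every element of $\mathcal{T}^{S^2}_V(n+1)$ whose three prescribed entries are $\omega$. This amounts to checking that the $y$-blocks exhaust the upper-triangular entries inside each of the four row-intervals $[1,a-1]$, $[a+1,b-1]$, $[b+1,c-1]$, $[c+1,n]$, while the successive connecting tensors supply the entries across different intervals; the degenerate cases where one of these intervals is empty are handled by replacing the relevant padding by the unit $\mathbf{1}\in \mathcal{T}^{S^2}_V(1)$ and using the unit axiom. A related subtlety in the inductive step is that Lemma \ref{lemma3}(2) is a relation in $\mathcal{T}^{S^2}_V(4)$, so its lift to $\mathcal{T}^{S^2}_V(n+1)$ must be shown to change only the three sub-triangle entries of $X$; this is handled by applying the ideal axiom of $\mathcal{E}^{S^2}_V$ iteratively along the whole chain of $\diamond$'s.
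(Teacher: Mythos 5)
Your proof is correct, but it is organized quite differently from the paper's. The paper inducts on $n$: using the three-term relation of Lemma \ref{lemma3}(2) on triangles $(i,j,n)$ it pushes $\omega$'s out of the last column until at most one remains, at which point the leading $(n-1)\times(n-1)$ block contains at least $n-1$ copies of $\omega$, vanishes by the inductive hypothesis, and kills $\hat X$ after one $\diamond_{n-1}$ insertion. You instead encode the $\omega$-positions as a graph $G$ on $[n]$, observe that $\abs{E(G)}\ge n$ forces a cycle, and induct on the girth: a shortest cycle of length $\ge 4$ has no chord at $(u_1,u_3)$, so the lifted Lemma \ref{lemma3}(2) relation on the triangle $\{u_1,u_2,u_3\}$ trades a cycle edge for the chord and strictly shortens the cycle, terminating at a monochromatic triangle, which is a lifted ideal generator. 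Both arguments rest on the same two ingredients --- the relation from Lemma \ref{lemma3}(2) and the fact that iterated $\diamond$-insertions realize an arbitrary ambient matrix around a prescribed $3\times 3$ sub-triangle --- and both therefore inherit the characteristic hypothesis of Lemma \ref{lemma3}, which the statement of the lemma omits. What your route buys: it actually proves the stronger statement that $\hat X=0$ whenever the $\omega$-graph merely contains a cycle (the count $\ge n$ is used only to guarantee one exists), it makes the connection to the Ramsey-number remark transparent, and you are more explicit than the paper about the one genuinely delicate point, namely that the $3\times 3$ relation lifts to position $\{a,b,c\}$ inside an $n\times n$ matrix changing only those three entries. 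What the paper's route buys is bookkeeping simplicity: the induction variable is just $n$, and no graph-theoretic apparatus is needed.
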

\begin{proof} We will prove this result by induction. If $n=3$ then the image of $X$ is 
$$\hat{X}=\begin{pmatrix} 
1& v_{1,2}&v_{1,3}\\
& 1&v_{2,3}\\
\wedge& &1\\
\end{pmatrix}=\begin{pmatrix} 
1& \omega&\omega\\
& 1&\omega\\
\wedge& &1\\
\end{pmatrix},$$
 which is obvious $0$ in ${\Lambda}^{S^2}_V(4)$. 

Take $n\geq 4$. First assume that the last column of $X$ has at least two entries that are equal to $\omega$, say $v_{i,n}=v_{j,n}=\omega$ for some $i<j$. If $v_{i,j}=\omega$ then obviously we have that $\hat{X}=0$. If $v_{i,j}\neq \omega$ then we can use the second relation from Lemma \ref{lemma3} to get 
$$\hat{X}=-\hat{X_1}-\hat{X_2},$$
where $X_{1}$ and $X_2$ are obtained from $X$ by interchanging the elements $v_{i,j}$ with $v_{i,n}$, respectively $v_{j,n}$ (i.e. we use the relation 
$\otimes\begin{pmatrix} 
1& v_{i,j}&\omega\\
&1&\omega\\
& &1
\end{pmatrix}=-\otimes\begin{pmatrix} 
1& \omega&v_{i,j}\\
&1&\omega\\
& &1
\end{pmatrix}-\otimes\begin{pmatrix} 
1& \omega&\omega\\
&1&v_{i,j}\\
& &1
\end{pmatrix}$).

Notice that the new upper triangular matrices $X_1$ and $X_2$ have the same entries as $X$, but one fewer $\omega$'s in the last column.  If we show that $\hat{X_1}$ and $\hat{X_2}$ are $0$ then we are done. 

By the above argument, we may assume  that $X$ has at most one entry equal to $\omega$ in the last column. This means that $X$ has at least $n-1$ entry equal to $\omega$ in first $n-1$ columns. Denote 
$$Y=\begin{pmatrix} 
0& v_{1,2}&...&v_{1,n-2}&v_{1,n-1}\\
& 0&...&v_{2,n-2}&v_{2,n-1}\\
& &...&.&.\\
& & &0&v_{n-2,n-1}\\
& & & &0
\end{pmatrix}\in V^{\frac{(n-1)(n-2)}{2}},$$
by the induction hypothesis, we have that $\hat{Y}=0\in {\Lambda}^{S^2}_V(n)$. Finally, notice that $\hat{X}=\begin{pmatrix} 
\hat{Y} & A\\
 \diamond_{n-1}& (1)
\end{pmatrix}$ where $(1)\in {\Lambda}^{S^2}_V(2)$, and $A\in {\mathcal B}_V(n-1,1)$ is the tensor corresponding to the last column in $X$. This proves that $\hat{X}=0$. 
\end{proof}

\begin{proposition} Let $V$ be a vector space with $dim(V)=d$. Then ${\Lambda}^{S^2}_V(n+1)=0$ for all $n>2d$. \label{propdim}
\end{proposition}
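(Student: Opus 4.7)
The plan is to combine multilinearity with a pigeonhole argument and then invoke Lemma \ref{lemma5}.

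First, I would reduce to simple tensors of a very specific form. Fix a basis $e_1, \ldots, e_d$ of $V$. By definition, $\Lambda^{S^2}_V(n+1)$ is spanned by images of simple tensors $\hat{X}$ whose entries $v_{i,j}$ ($1 \le i < j \le n$) lie in $V$. Since the tensor matrix is multilinear in each of its $\binom{n}{2}$ slots, every such simple tensor can be written as a sum of simple tensors in which each entry $v_{i,j}$ is one of the basis vectors $e_\ell$. So it suffices to prove that $\hat{X} = 0$ whenever every entry of $X$ belongs to $\{e_1, \ldots, e_d\}$.

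Next comes the counting step. A tensor matrix $X \in V^{\otimes \binom{n}{2}}$ of the form just described has exactly $\binom{n}{2} = \frac{n(n-1)}{2}$ entries, each labeled by one of $d$ basis vectors. By pigeonhole, some basis vector $e_\ell$ must appear at least
\begin{equation*}
\left\lceil \frac{n(n-1)}{2d} \right\rceil
\end{equation*}
times among the entries of $X$. Since we assume $n > 2d$, we have $n - 1 \ge 2d$, and therefore
\begin{equation*}
\frac{n(n-1)}{2d} \;\ge\; \frac{n \cdot 2d}{2d} \;=\; n.
\end{equation*}
Thus there is a basis vector $\omega = e_\ell$ appearing at least $n$ times among the entries of $X$.

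Finally, I would invoke Lemma \ref{lemma5} directly: with $\omega$ appearing at least $n$ times in the entries of $X$, the image $\hat{X}$ in $\Lambda^{S^2}_V(n+1)$ vanishes. Since the simple tensors with basis-vector entries span $\Lambda^{S^2}_V(n+1)$ and each such $\hat{X}$ is zero, we conclude $\Lambda^{S^2}_V(n+1) = 0$. The whole argument is essentially a short pigeonhole observation; the genuine content has already been done in Lemma \ref{lemma5}, which is where the tricky algebraic manipulations via the alternative generating sets of $\mathcal{E}^{S^2}_V$ (Lemma \ref{lemma3}) and the induction on $n$ do the actual work. There is no real obstacle here beyond checking the arithmetic inequality $n(n-1)/(2d) \ge n$ for $n \ge 2d+1$.
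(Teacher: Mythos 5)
Your proof is correct and follows essentially the same route as the paper: reduce by multilinearity to tensor matrices with basis-vector entries, count the $\binom{n}{2}$ entries against the $d$ basis vectors, and invoke Lemma \ref{lemma5}. The paper states the counting step contrapositively (a nonzero generator forces $\frac{n(n-1)}{2}\leq d(n-1)$, i.e.\ $n\leq 2d$), whereas you run the pigeonhole directly, but the two arguments are identical in substance.
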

\begin{proof} Let $\mathcal{E}=\{ e_1, ...,e_d\}$ be a basis for $V$. It is obvious that 
$$\mathcal{G}(n+1)=\{\begin{pmatrix} 
1& v_{1,2}&...&v_{1,n-1}&v_{1,n}\\
& 1&...&v_{2,n-1}&v_{2,n}\\
& &...&.&.\\
& & &1&v_{n-1,n}\\
\wedge& & & &1
\end{pmatrix}\; \vert \; v_{i,j}\in \mathcal{E}\},$$ is a system of generators for ${\Lambda}^{S^2}_V(n+1)$. To prove our statement it is enough to show that every element in $\mathcal{G}(n+1)$ is trivial. 

The number of entries in the upper triangular matrix $X=\begin{pmatrix} 
0& v_{1,2}&...&v_{1,n-1}&v_{1,n}\\
& 0&...&v_{2,n-1}&v_{2,n}\\
& &...&.&.\\
& & &0&v_{n-1,n}\\
& & & &0
\end{pmatrix}$ is equal to $\frac{n(n-1)}{2}$. On the other hand, by Lemma \ref{lemma5} we know that in order to get a nonzero element we cannot have more then $n-1$ copies of each of the $d$ elements in $\mathcal{E}$. In other words if we want an nonzero element in $\mathcal{G}(n+1)$ then we must have $\frac{n(n-1)}{2}\leq d(n-1)$, or equivalently $n\leq 2d$. 
\end{proof}

\begin{proposition} Suppose that $V$ is a two dimensional vector space with a basis $\{a,b\}$. If $m> 5$ then  ${\Lambda}^{S^2}_V(m)=0$. For $m\leq 5$ we have the following basis: 

${\mathcal B}(1)=\{ {\bf 1}\},$

${\mathcal B}(2)=\{\wedge\begin{pmatrix} 
1
\end{pmatrix}\}$ for ${\Lambda}^{S^2}_V(2)$, 

 ${\mathcal B}(3)=\{  \begin{pmatrix} 
1& a\\
\wedge&1
\end{pmatrix}, ~ \begin{pmatrix} 
1& b\\
\wedge&1
\end{pmatrix} \}$ for ${\Lambda}^{S^2}_V(3)$,  

 ${\mathcal B}(4)=\{  \begin{pmatrix} 
1& a&a\\
&1&b\\
\wedge& &1
\end{pmatrix}, ~
\begin{pmatrix} 
1& a&b\\
&1&a\\
\wedge& &1
\end{pmatrix}, ~
\begin{pmatrix} 
1& b&b\\
&1&a\\
\wedge& &1
\end{pmatrix}, ~
\begin{pmatrix} 
1& a&b\\
&1&b\\
\wedge& &1
\end{pmatrix} \}$ for ${\Lambda}^{S^2}_V(4)$, and

 ${\mathcal B}(5)=\{ \begin{pmatrix} 
1& a&b&b\\
&1&a&b\\
&  &1&a\\
\wedge&&&1
\end{pmatrix} \}$ for ${\Lambda}^{S^2}_V(5)$. 
\label{prop3}
\end{proposition}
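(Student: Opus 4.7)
The case $m > 5$ is immediate from Proposition~\ref{propdim} since $\dim V = 2$, and the cases $m\leq 3$ are essentially definitional: no ideal generator lies in ${\mathcal T}^{S^2}_V(m)$ for $m\leq 3$, so ${\Lambda}^{S^2}_V(3) = V$ has basis $\{a,b\}$, while ${\Lambda}^{S^2}_V(1)$ and ${\Lambda}^{S^2}_V(2)$ are the specified one-dimensional spaces by definition.

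For $m=4$ I would expand the ideal generator $\otimes\begin{pmatrix} 1 & v & v \\ & 1 & v \\ & & 1 \end{pmatrix}$ with $v=\alpha a + \beta b$ and compare coefficients of $\alpha^i\beta^{3-i}$. This produces four relations on the eight generators $X_\epsilon$ indexed by $\epsilon\in\{a,b\}^3$; since these four relations act on disjoint subsets of generators they are manifestly independent, and reduce $X_{aaa}$, $X_{bbb}$, $X_{baa}$, $X_{bab}$ to elements of the proposed $\mathcal{B}(4)$, which therefore spans. For linear independence, I would invoke Proposition~\ref{2alt}: the very same four conditions cut the $8$-dimensional space of trilinear maps $V^3\to k$ down to a $4$-dimensional space of $2$-alternating maps, so $\dim{\Lambda}^{S^2}_V(4)\geq 4$, matching the upper bound from spanning.

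For $m=5$ the argument has two combinatorial ingredients. By Lemma~\ref{lemma5} only the $\binom{6}{3}=20$ generators with exactly three $a$'s and three $b$'s among the six entries can be nonzero. Among these, the eight whose three $a$-edges (or three $b$-edges) form one of the four triangles of $K_4$ lie in ${\mathcal E}^{S^2}_V$ because they are visibly of the form $\diamond_i(g,\mathbf{1},A)$ for some $1\leq i\leq 4$, with $g$ an ideal generator in ${\mathcal T}^{S^2}_V(4)$. The remaining twelve generators correspond to $2$-colorings of $K_4$ in which each color class is a Hamiltonian path (since in $K_4$ the edge-complement of a triangle is a star and vice versa, while the complement of a Hamiltonian path is again a Hamiltonian path). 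Applying the three-term relation of Lemma~\ref{lemma3}(2) inside each of the four triangles of $K_4$, I would observe that in every such relation one of the three terms creates a monochromatic triangle somewhere else in $K_4$ and hence vanishes, while the remaining two terms relate a pair of Hamiltonian-path colorings by a sign. A finite case check, organized as a graph on the twelve colorings whose edges record these three-term relations, then shows that all twelve collapse onto $\pm \mathcal{B}(5)$, giving the spanning bound $\dim{\Lambda}^{S^2}_V(5) \leq 1$.

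The remaining and main obstacle is to show $\mathcal{B}(5)\neq 0$ in ${\Lambda}^{S^2}_V(5)$. By Proposition~\ref{2alt}, this is equivalent to exhibiting a $2$-alternating $6$-linear map $\phi:V^6\to k$ which is nonzero on the Hamiltonian-path pattern. I would do this by writing out the $64$-dimensional space of $6$-linear maps $V^6\to k$ in the monomial basis induced by $\{a,b\}$ and imposing the constraints $\phi(X(T,v))=0$ for each of the four triangles $T$ of $K_4$ and each $v=\alpha a+\beta b$, expanded coefficient-by-coefficient, using the $S_4$-symmetry of $K_4$ to reduce the bookkeeping. The delicate part is producing such a $\phi$ explicitly: a first candidate built from the three perfect matchings of $K_4$ and the area form on $V$ does not satisfy the $2$-alternating condition on its own, so an antisymmetrization (or summation over the three matching classes with appropriate signs) is required. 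This step is exactly the computation that produces the ``determinant-like function'' advertised in the introduction, and is where the specific arithmetic of $\dim V=2$ is used to pin down the one-dimensional kernel.
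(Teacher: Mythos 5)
Your overall architecture is the same as the paper's: dispose of $m>5$ via Proposition~\ref{propdim}, reduce to a spanning set using the relations of Lemma~\ref{lemma3} and Lemma~\ref{lemma5}, and prove independence by exhibiting a $2$-alternating functional through Proposition~\ref{2alt}. One point where you genuinely diverge, to your advantage: for ${\mathcal B}(4)$ you observe that the degree-$4$ component of the ideal is exactly the $4$-dimensional span of the four independent polarized relations (the generators being the only ideal elements of that degree), so $\dim\Lambda^{S^2}_V(4)=8-4=4$ directly. The paper instead proves independence of ${\mathcal B}(4)$ by pushing a putative relation into $\Lambda^{S^2}_V(5)$ via $\diamond_4$ with a chosen column $A$, killing three of the four terms by Lemma~\ref{lemma5}, and invoking the nonvanishing of the degree-$5$ functional; your route does not depend on the degree-$5$ result at all. (Your coefficient extraction from $g_{\alpha a+\beta b}$ does need the field to be large enough, consistent with the characteristic assumptions of Lemma~\ref{lemma3}.)

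The genuine gap is at the step you yourself call ``the remaining and main obstacle'': you never produce the nonzero $2$-alternating $6$-linear map $\phi:V^6\to k$. Everything else in your $m=5$ argument only delivers the upper bound $\dim\Lambda^{S^2}_V(5)\le 1$; without an explicit nonvanishing functional you cannot rule out that all twenty surviving generators collapse to $0$, so neither ${\mathcal B}(5)\neq 0$ nor $\dim\Lambda^{S^2}_V(5)=1$ is established. This construction is precisely the substance of the paper's proof: it writes down an explicit twelve-term multilinear form $Det^{S^2}$ in the coordinates $(\alpha_{i,j},\beta_{i,j})$, verifies by direct computation that it vanishes on each of the four triangle patterns (hence is $2$-alternating), and evaluates it to $1$ on the element of ${\mathcal B}(5)$. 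Your observation that the naive candidate built from the perfect matchings of $K_4$ and the area form on $V$ fails is correct, but ``an antisymmetrization is required'' is a plan, not a proof: the existence of a nonzero solution to the polarized triangle constraints inside the $64$-dimensional space of $6$-linear forms is exactly the nontrivial fact to be certified, and it is not guaranteed a priori. Until you exhibit such a $\phi$ explicitly and check the two required properties, the proposition is not proved.
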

\begin{proof} It is a straightforward but tedious computation to show that the above elements  generate the corresponding vector spaces. We need to show that they are also linearly independent. 
This follows the same line of proof as for the  exterior algebra ${\Lambda}(V)$. 

We start by showing  that $ \begin{pmatrix} 
1& a&b&b\\
&1&a&b\\
& &1&a\\
\wedge&&&1
\end{pmatrix}\neq 0 $. Consider the map $Det^{S^2}:V^6\to k$ determined by 

\begin{eqnarray*} &Det^{S^2}\begin{pmatrix} 
0& (\alpha_{1,2},\beta_{1,2})&(\alpha_{1,3},\beta_{1,3})&(\alpha_{1,4},\beta_{1,4})\\
&0&(\alpha_{2,3},\beta_{2,3})&(\alpha_{2,4},\beta_{2,4})\\
& &0&(\alpha_{3,4},\beta_{3,4})\\
&&&0
\end{pmatrix}=&\\
&\alpha_{1,2}\alpha_{2,3}\alpha_{3,4}\beta_{1,3}\beta_{2,4}\beta_{1,4}+
\alpha_{1,2}\beta_{2,3}\alpha_{3,4}\beta_{1,3}\beta_{2,4}\alpha_{1,4}+
\alpha_{1,2}\beta_{2,3}\beta_{3,4}\alpha_{1,3}\alpha_{2,4}\beta_{1,4}+
\beta_{1,2}\beta_{2,3}\alpha_{3,4}\alpha_{1,3}\alpha_{2,4}\beta_{1,4}+&\\
&\beta_{1,2}\alpha_{2,3}\beta_{3,4}\beta_{1,3}\alpha_{2,4}\alpha_{1,4}+
\beta_{1,2}\alpha_{2,3}\beta_{3,4}\alpha_{1,3}\beta_{2,4}\alpha_{1,4}-
\beta_{1,2}\beta_{2,3}\beta_{3,4}\alpha_{1,3}\alpha_{2,4}\alpha_{1,4}-
\beta_{1,2}\alpha_{2,3}\beta_{3,4}\alpha_{1,3}\alpha_{2,4}\beta_{1,4}-&\\
&\beta_{1,2}\alpha_{2,3}\alpha_{3,4}\beta_{1,3}\beta_{2,4}\alpha_{1,4}-
\alpha_{1,2}\alpha_{2,3}\beta_{3,4}\beta_{1,3}\beta_{2,4}\alpha_{1,4}-
\alpha_{1,2}\beta_{2,3}\alpha_{3,4}\alpha_{1,3}\beta_{2,4}\beta_{1,4}-
\alpha_{1,2}\beta_{2,3}\alpha_{3,4}\beta_{1,3}\alpha_{2,4}\beta_{1,4}.&
\end{eqnarray*}

One can see that $Det^{S^2}$ is $k$-linear in all the six variables. Moreover we have 
\begin{eqnarray*} &Det^{S^2}\begin{pmatrix} 
0& (\alpha,\beta)&(\alpha,\beta)&(\alpha_{1,4},\beta_{1,4})\\
&0&(\alpha,\beta)&(\alpha_{2,4},\beta_{2,4})\\
& &0&(\alpha_{3,4},\beta_{3,4})\\
&&&0
\end{pmatrix}=0,
\end{eqnarray*}

\begin{eqnarray*}
Det^{S^2}\begin{pmatrix} 
0& (\alpha,\beta)&(\alpha_{1,3},\beta_{1,3})&(\alpha,\beta)\\
&0&(\alpha_{2,3},\beta_{2,3})&(\alpha,\beta)\\
& &0&(\alpha_{3,4},\beta_{3,4})\\
&&&0
\end{pmatrix}=0,
\end{eqnarray*}

\begin{eqnarray*}
Det^{S^2}\begin{pmatrix} 
0& (\alpha_{1,2},\beta_{1,2})&(\alpha,\beta)&(\alpha,\beta)\\
&0&(\alpha_{2,3},\beta_{2,3})&(\alpha_{2,4},\beta_{2,4})\\
& &0&(\alpha,\beta)\\
&&&0
\end{pmatrix}=0,
\end{eqnarray*}

\begin{eqnarray*}
Det^{S^2}\begin{pmatrix} 
0& (\alpha_{1,2},\beta_{1,2})&(\alpha_{1,3},\beta_{1,3})&(\alpha_{1,4},\beta_{1,4})\\
&0&(\alpha,\beta)&(\alpha,\beta)\\
& &0&(\alpha,\beta)\\
&&&0
\end{pmatrix}=0.
\end{eqnarray*}
In other words $Det^{S^2}$ is $k$-multi-linear and $2$-alternated, and so by Proposition \ref{2alt} we know that there exists a linear map  $det^{S^2}\in Hom_k(\Lambda^{S^2}_V(5), k)$ such that 

$$det^{S^2}\begin{pmatrix} 
1& (\alpha_{1,2},\beta_{1,2})&(\alpha_{1,3},\beta_{1,3})&(\alpha_{1,4},\beta_{1,4})\\
&1&(\alpha_{2,3},\beta_{2,3})&(\alpha_{2,4},\beta_{2,4})\\
& &1&(\alpha_{3,4},\beta_{3,4})\\
\wedge&&&1
\end{pmatrix}=Det^{S^2}\begin{pmatrix} 
0& (\alpha_{1,2},\beta_{1,2})&(\alpha_{1,3},\beta_{1,3})&(\alpha_{1,4},\beta_{1,4})\\
&0&(\alpha_{2,3},\beta_{2,3})&(\alpha_{2,4},\beta_{2,4})\\
& &0&(\alpha_{3,4},\beta_{3,4})\\
&&&0
\end{pmatrix}.$$

Moreover, sice
\begin{eqnarray*}
Det^{S^2}\begin{pmatrix} 
0& (1,0)&(0,1)&(0,1)\\
&0&(1,0)&(0,1)\\
& &0&(1,0)\\
&&&0
\end{pmatrix}=1,
\end{eqnarray*}
we have  that $0\neq det^{S^2}\in Hom_k(\Lambda^{S^2}_V(5), k)$, and so 
$ \begin{pmatrix} 
1& a&b&b\\
&1&a&b\\
& &1&a\\
\wedge&&&1
\end{pmatrix}\neq 0$ as an element in ${\Lambda}^{S^2}_V(5)$. In particular we get that $dim_k({\Lambda}^{S^2}_V(5))=1$.

Next, suppose that $\alpha$, $\beta$, $\gamma$, and $\delta\in k$ such that 
$$x=\alpha \begin{pmatrix} 
1& a&a\\
&1&b\\
\wedge& &1
\end{pmatrix}+\beta 
\begin{pmatrix} 
1& a&b\\
&1&a\\
\wedge& &1
\end{pmatrix}+\gamma
\begin{pmatrix} 
1& b&b\\
&1&a\\
\wedge& &1
\end{pmatrix}+\delta
\begin{pmatrix} 
1& a&b\\
&1&b\\
\wedge& &1
\end{pmatrix}=0\in {\Lambda}^{S^2}_V(4).$$
Take $y=\wedge\begin{pmatrix} 
1
\end{pmatrix}\in {\Lambda}^{S^2}_V(2)$, and $A=\otimes \begin{pmatrix} 
b\\
a\\
b
\end{pmatrix}\in {\mathcal B}_V(4,2)$. Then we have $\begin{pmatrix} 
x & A\\
 \diamond_4& y 
\end{pmatrix}=0$, or equivalently

$$\alpha \begin{pmatrix} 
1& a&a&b\\
&1&b&a\\
& &1&b\\
\wedge&&&1
\end{pmatrix}+\beta 
\begin{pmatrix} 
1& a&\boxed{b}&\boxed{b}\\
&1&a&a\\
& &1&\boxed{b}\\
\wedge&&&1
\end{pmatrix}+\gamma
\begin{pmatrix} 
1& b&\boxed{b}&\boxed{b}\\
&1&a&a\\
& &1&\boxed{b}\\
\wedge&&&1
\end{pmatrix}+\delta
\begin{pmatrix} 
1& a&\boxed{b}&\boxed{b}\\
&1&b&a\\
& &1&\boxed{b}\\
\wedge&&&1
\end{pmatrix}=0,$$
and so $\alpha=0$. Taking $A=\otimes \begin{pmatrix} 
a\\
b\\
b
\end{pmatrix}$ we get that $\beta=0$. Similarly one can show that $\gamma=\delta=0$, and so ${\mathcal B}(4)$ is a basis for  ${\Lambda}^{S^2}_V(4)$.  An analog argument can be used to show that ${\mathcal B}(3)$ is a basis for  ${\Lambda}^{S^2}_V(3)$.
\end{proof}

\begin{remark} Let $V$ be a two dimensional vector space with a basis $\{a,b\}$, and  $T:V\to V$ a $k$-linear map. One can check that $\wedge^{S^2}(T): {\Lambda}^{S^2}_V(5)\to {\Lambda}^{S^2}_V(5)$ is the multiplication by $det(T)^3$. 
\end{remark}

\begin{remark} One should notice that there is some inconsistency between the degree convention for the exterior algebra $\Lambda(V)$ (with nonzero components in degrees from $0$ to $d$), and the degree convention  we use for ${\Lambda}^{S^2}_V$ (with nonzero components in degrees from $1$ to $2d+1$). One can argue that it might be more natural to shift the degree down with one unit (and have nonzero components from degree $0$ to $2d$). The reason we decided to have this notation is because we want our grading to be consistent with the GSC-operad structure on ${\Lambda}^{S^2}_V$ (see Example \ref{example3} for a similar situation).
\end{remark}

\section{Some Remarks}



From Proposition \ref{propdim} we know that if $dim(V)=d$ then ${\Lambda}^{S^2}_V(m)=0$ for all $m>2d+1$.   Also, when $dim(V)=2$ we know from Proposition \ref{prop3} that $dim_k({\Lambda}^{S^2}_V(5))=1$. We make the following conjecture.
\begin{conjecture}  If $dim_k(V)=d$ then $dim_k({\Lambda}^{S^2}_V(2d+1))=1$. 
\end{conjecture}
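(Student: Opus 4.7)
The plan is to establish the conjecture in two parts: an upper bound $\dim\Lambda^{S^2}_V(2d+1)\le 1$ by a combinatorial straightening argument, and a lower bound $\dim\Lambda^{S^2}_V(2d+1)\ge 1$ by constructing an explicit nonzero functional in $\mathrm{Hom}_k(\Lambda^{S^2}_V(2d+1),k)$.

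For the upper bound, fix a basis $\{e_1,\dots,e_d\}$ of $V$. The spanning system $\mathcal{G}(2d+1)$ from the proof of Proposition \ref{propdim} shows that every element of $\Lambda^{S^2}_V(2d+1)$ is a sum of monomials indexed by assignments of basis vectors to the entries $v_{ij}$ for $1\le i<j\le 2d$. The counting argument in Proposition \ref{propdim}, combined with Lemma \ref{lemma5}, forces every nonzero such monomial to correspond to a $d$-coloring of the edges of $K_{2d}$ in which each color appears exactly $2d-1$ times and no triangle is monochromatic. The six-term identity from part (3) of Lemma \ref{lemma3} then acts as a local move on these colorings: fixing all edges outside some triangle, the sum of the six reorderings of the triangle's three colors vanishes modulo $\mathcal{E}^{S^2}_V$. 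I would prove that the graph whose vertices are these balanced, monochromatic-triangle-free $d$-colorings, with edges given by the triangle-swap relations, is connected, so that all nonzero generators are scalar multiples of a single canonical one modulo $\mathcal{E}^{S^2}_V$.

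For the lower bound, I would construct a $2$-alternating multilinear map $Det^{S^2}\colon V^{d(2d-1)}\to k$ generalizing the explicit formula used in the proof of Proposition \ref{prop3}. Writing each variable $v_{ij}=\sum_\ell \alpha_{ij}^\ell e_\ell$, the candidate has the form
$$Det^{S^2}(X) = \sum_{\phi\in\mathcal{F}} \mathrm{sgn}(\phi) \prod_{i<j}\alpha_{ij}^{\phi(i,j)},$$
where $\mathcal{F}$ is the set of balanced, monochromatic-triangle-free $d$-colorings of the edges of $K_{2d}$ and $\mathrm{sgn}(\phi)\in\{\pm 1\}$ is a sign function to be specified. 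Once $Det^{S^2}$ is shown to vanish on tensors where three entries sharing a triangle coincide, Proposition \ref{2alt} supplies a well-defined functional $det^{S^2}\in\mathrm{Hom}_k(\Lambda^{S^2}_V(2d+1),k)$; evaluation on a standard generator associated with a Hamiltonian-path decomposition of $K_{2d}$ should yield a nonzero value.

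The main obstacle is defining $\mathrm{sgn}(\phi)$ coherently for arbitrary $d$; this amounts to producing a $\pm 1$-valued cocycle on $\mathcal{F}$ compatible with all triangle-swap moves, i.e.\ such that the signed sum over the six reorderings of any triangle cancels. The twelve-term formula verified in Proposition \ref{prop3} handles $d=2$, but the combinatorics expands rapidly with $d$. A promising alternative is to proceed inductively: assuming a nonzero $\omega \in \Lambda^{S^2}_{V'}(2d-1)$ for some $(d-1)$-dimensional subspace $V'\subset V$, one would build a candidate in $\Lambda^{S^2}_V(2d+1)$ by a GSC-operad composition $\diamond_i$ of $\omega$ with a suitable element of $\Lambda^{S^2}_V(3)$ supported on the remaining basis vector, then invoke the connectivity result from the upper-bound step to identify the class of the resulting element with the canonical generator and transport non-vanishing from the inductive functional. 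I expect the combinatorial connectivity and the coherent sign assignment to be where most of the work lies.
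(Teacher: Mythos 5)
The statement you are addressing is stated in the paper only as a conjecture: the authors prove it for $d=2$ (Proposition \ref{prop3}), report partial numerical evidence for $d=3$ in the appendix, and explicitly note that they could not even compute the relevant quantity $E_6^{(5,5,5)}$ (equivalently $\dim\Lambda^{S^2}_V(7)$) for $d=3$. So there is no proof in the paper to compare against, and your proposal, while it correctly identifies the two halves of what a proof would require, is a research program rather than a proof: both of its key steps contain genuine gaps. For the upper bound, the relation coming from part (3) of Lemma \ref{lemma3} is a \emph{six-term} identity, not a two-term swap. Connectivity of a graph whose edges are ``triangle reorderings'' does not by itself force the quotient to be one-dimensional: a relation of the form $\sum_{\sigma}X_\sigma=0$ over the six orderings of a triangle only expresses one monomial as minus the sum of five others, so the quotient of the span of balanced, monochromatic-triangle-free colorings by these relations could well have dimension greater than one even when the swap graph is connected. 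The appendix data makes this concrete: for $d=3$ the analogous quotients in lower degree have dimensions $22$, $16$, etc., so the relations do not generically collapse everything to a line, and any argument must isolate what is special about the extremal case $n=2d$. You would need to show that the relation matrix has corank exactly one on the set $\mathcal{F}$, which is precisely the computation the authors could not carry out.

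For the lower bound, the entire difficulty is concentrated in the sign function $\mathrm{sgn}(\phi)$, which you leave unspecified. Its existence is essentially equivalent to the statement you are trying to prove: the condition that the signed six-term sums cancel for \emph{every} triangle and \emph{every} coloring of the complementary edges (needed for $Det^{S^2}$ to be $2$-alternating and hence descend via Proposition \ref{2alt}) is exactly the corank-one statement from the upper bound, read dually. The twelve-term formula in Proposition \ref{prop3} is an ad hoc verification for $d=2$ and gives no indication of how to produce such a cocycle in general. The inductive alternative via $\diamond_i$-composition has the same circularity: composing a nonzero $\omega\in\Lambda^{S^2}_{V'}(2d-1)$ with an element of $\Lambda^{S^2}_V(3)$ produces a candidate class, but certifying that it is nonzero in the quotient requires either the functional or the connectivity-plus-corank statement you have not established. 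In short, the proposal is a reasonable outline of where the difficulty lies, but neither bound is proved, and the statement remains open.
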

If this conjecture is true then we would be able  to define a determinant like function $det^{S^2}$ for any vector space $V$ (and not only for vector spaces of dimension $2$). This potentially  could open several directions of research.

Also in the spirit of Proposition \ref{prop3}, it would also be useful to find an explicit base for ${\Lambda}^{S^2}_V(m)$ for any finite dimensional vector space $V$.  
\begin{remark}
Our results do not depend on the fact that $V$ is a vector space, one can give a similar construction for a free module of finite rank over a commutative ring $R$. 
\end{remark}
\begin{remark}
Exterior algebra plays an important role in geometry and differential topology. A $p$-form, is nothing else but an element in $\wedge^p\Omega^1_M$. It is natural to ask if there  there is  an "$S^2$-geometry" based on ${\Lambda}^{S^2}_{\Omega^1_M}$. 
\end{remark}

\begin{remark}
The construction in this paper is bases on a certain simplicial presentation of the sphere  $S^2$. It is natural to ask if there are similar constructions that can be associated to any simplicial set $X$, or at least to the spheres $S^n$. If such constructions are possible, are they  homotopy invariant? (like higher Hochschild  homology in \cite{p}). 
\end{remark}

\section*{Acknowledgment}
We thank Tong Sun for some discussions and help with  Mathlab. 


\appendix

\maketitle

\section{$dim(V)=3$ (Joint work with Ana Lorena Gherman)}
When the  vector space $V$  has dimension $3$ we were able to compute some of the dimensions of  ${\Lambda}^{S^2}_V(n)$  using Mathlab. Here we give  a summary of these computations.

First consider a vector space $V$ of dimension $d$, and recall the notations from Proposition \ref{propdim}. Notice that once we fix a basis $\mathcal{E}=\{ e_1, ...,e_d\}$  for $V$ then there exists a subset in  $\mathcal{G}(n+1)$ that is a basis for ${\Lambda}^{S^2}_V(n+1)$. Moreover  this basis of ${\Lambda}^{S^2}_V(n+1)$ has a partition determined by multiplicities of the elements in $\mathcal{E}$. 

Let $1\leq n\leq 2d$, and $k_i\geq  0$ for all $1\leq i\leq d$ such that $k_1+k_2+...+k_d=\frac{n(n-1)}{2}$. We denote by $E_n^{(k_1,k_2,...,k_d)}$ the maximum number of linearly independent vectors in $\mathcal{G}(n+1)$ with $k_i$ entries of $e_i$ for all $1\leq i\leq d$. For example, Proposition \ref{prop3} gives that when $d=2$ we have  
$$E_2^{(1,0)}=E_2^{(0,1)}=1,~~~E_3^{(2,1)}=E_3^{(1,2)}=2,~~~ {\rm and} ~E_4^{(3,3)}=1.$$ 

It is easy to see that $E_n^{(k_1,k_2,...,k_d)}$ is invariant under any permutation of the indices $(k_1,k_2,...,k_d)$, so without loose of generality, we may assume that $k_1\geq k_2\geq ...\geq k_d\geq 0$. When going from dimension $d$ to $d+1$ one has $E_n^{(k_1,k_2,...,k_d)}=E_n^{(k_1,k_2,...,k_d,0)}$. Also, from Lemma \ref{lemma5} we have that $E_n^{(k_1,k_2,...,k_d)}=0$ when ever $k_1\geq n$. 

Using the relations from Lemma \ref{lemma3} one can write a linear system whose rank is 
$E_n^{(k_1,k_2,...,k_d)}$.   The following result was obtained using Mathlab.  
\begin{lemma} Let $V$ be a vector space with $dim(V)=3$ then we have:\\
1) $E_2^{(1,0,0)}=1$,\\
2) $E_3^{(2,1,0)}=2$, $E_3^{(1,1,1)}=5$, \\
3) $E_4^{(3,3,0)}=1$,  $E_4^{(3,2,1)}=9$, $E_4^{(2,2,2)}=22$,\\
4) $E_5^{(4,4,2)}=6$,  $E_5^{(4,3,3)}=16$.
\end{lemma}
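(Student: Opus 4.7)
The plan is to compute each $E_n^{(k_1,k_2,k_3)}$ by a direct linear-algebra computation on an explicit presentation of the corresponding multiplicity-graded piece of $\Lambda^{S^2}_V(n+1)$, carried out by hand in the small cases and by Matlab in the larger ones.

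Fix the basis $\mathcal{E}=\{e_1,e_2,e_3\}$ of $V$ and a tuple $(k_1,k_2,k_3)$ with $k_1+k_2+k_3=\tfrac{n(n-1)}{2}$. Let $G\subset \mathcal{G}(n+1)$ be the finite set of upper-triangular tensor matrices whose entries contain $e_i$ exactly $k_i$ times; then $|G|=\binom{n(n-1)/2}{k_1,k_2,k_3}$. The subspace of $\Lambda^{S^2}_V(n+1)$ they span is a quotient of the free vector space $kG$ by the span of relations obtained by inserting the generators of Lemma~\ref{lemma3}(2)--(3) at each of the $\binom{n}{3}$ $3\times 3$ sub-corners of the upper triangle, for every assignment of the remaining $\tfrac{n(n-1)}{2}-3$ entries from $\mathcal{E}$ that respects the multiplicity profile. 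First I would enumerate $G$ lexicographically, then assemble a sparse matrix $R$ whose rows encode these relations and whose columns are indexed by $G$. Then
$$E_n^{(k_1,k_2,k_3)} = |G| - \operatorname{rank}(R).$$

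For $E_2^{(1,0,0)}$ there are no $3\times 3$ sub-corners at all, and the lone candidate is visibly nonzero. For $E_3^{(2,1,0)}$ and $E_3^{(1,1,1)}$ there is a single sub-corner and a single applicable relation (from parts (2) and (3) of Lemma~\ref{lemma3} respectively), giving $3-1=2$ and $6-1=5$. The remaining six values are handled by the Matlab rank computation; the matrices for $E_4^{(2,2,2)}$ and $E_5^{(4,3,3)}$ have on the order of $10^3$--$10^4$ rows and columns but are sparse, so exact rank over $\mathbb{Q}$ (or, more safely, over a large prime) is well within reach.

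The main obstacle is certifying the \emph{lower bound} in the larger cases: a miscount in the relation enumeration could only underestimate $\operatorname{rank}(R)$ and hence inflate the claimed dimension. To guard against this, for each listed value I would aim to exhibit that many linearly independent $2$-alternating multilinear functionals $V^{\otimes n(n-1)/2}\to k$, modeled on the $det^{S^2}$ of Proposition~\ref{prop3}; by Proposition~\ref{2alt} these descend to linearly independent elements of $\operatorname{Hom}_k(\Lambda^{S^2}_V(n+1),k)$ and pin the dimension down from below. A natural source of such functionals is signed sums of monomials in the coordinates of the tensor entries designed to vanish on every Lemma~\ref{lemma3}(3)-cycle, and producing a complete set of them is the most delicate piece of the verification.
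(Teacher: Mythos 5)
Your proposal is correct and follows essentially the same route as the paper: the paper likewise reduces each $E_n^{(k_1,k_2,k_3)}$ to the rank of the linear system of relations coming from Lemma~\ref{lemma3} within the fixed multiplicity-graded spanning set $\mathcal{G}(n+1)$, and evaluates that rank by computer (Matlab), with the small cases checkable by hand exactly as you do. Your additional plan to certify the lower bounds by exhibiting independent $2$-alternating functionals goes beyond what the paper records (it does this only for $\dim V=2$ via $Det^{S^2}$), but it is a safeguard rather than a change of method.
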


In particular we can compute the dimension of ${\Lambda}^{S^2}_V(m)$. 
\begin{lemma} Let $V$ be a vector space such that $dim(V)=3$ then\\
1) $dim({\Lambda}^{S^2}_V(1))=1$,\\
2) $dim({\Lambda}^{S^2}_V(2))=1$,\\
3) $dim({\Lambda}^{S^2}_V(3))=3$,\\
4) $dim({\Lambda}^{S^2}_V(4))=17=12+5$,\\
5) $dim({\Lambda}^{S^2}_V(5))=79=3+54+22$,\\
6) $dim({\Lambda}^{S^2}_V(6))=66=18+48$.
\label{lemma7}
\end{lemma}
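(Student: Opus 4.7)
The plan is to decompose $\Lambda^{S^2}_V(n+1)$ as a direct sum of multigraded pieces indexed by multiplicity profiles $(k_1,k_2,k_3)$ relative to a fixed basis $\mathcal{E}=\{e_1,e_2,e_3\}$ of $V$, identify each piece's dimension with the invariant $E_n^{(k_1,k_2,k_3)}$ from the previous lemma, and sum.

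The cases $m=1$ and $m=2$ are immediate from the conventions $\mathcal{T}^{S^2}_V(1)=k\mathbf{1}$ and $\mathcal{T}^{S^2}_V(2)=k(1)$, on which no relations are imposed. For $m=n+1\geq 3$, the tensor monomials in $\mathcal{G}(n+1)$ with entries from $\mathcal{E}$ form a basis of $\mathcal{T}^{S^2}_V(n+1)$ and partition into subspaces $T^{(k_1,k_2,k_3)}$ according to the multiset of entry values. I would next verify that the ideal $\mathcal{E}^{S^2}_V$ is homogeneous with respect to this multigrading by passing to form 3) of Lemma \ref{lemma3}: with $u,v,w$ specialized to elements of $\mathcal{E}$, each of the six summands shares the same multiplicity profile, so the generator is multihomogeneous, and by $k$-trilinearity in $u,v,w$ these specializations still generate the ideal. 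The insertion maps $\diamond_i$ acting on monomial basis elements merge their entry multisets and so preserve multigrading. Consequently $\Lambda^{S^2}_V(n+1)=\bigoplus_{(k_1,k_2,k_3)} T^{(k_1,k_2,k_3)}/(\mathcal{E}^{S^2}_V\cap T^{(k_1,k_2,k_3)})$, each summand having dimension $E_n^{(k_1,k_2,k_3)}$ by definition, and Lemma \ref{lemma5} kills those with some $k_i\geq n$.

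The last step is pure bookkeeping. For each $m$, enumerate the unordered partitions of $\binom{m-1}{2}$ into at most three parts each bounded by $m-2$; multiply each $E$-value by the number of distinct orderings of its parts (legitimate because $E$ is invariant under permuting indices) and add. For $m=3$ only $\{1,0,0\}$ contributes, giving $3\cdot 1=3$. For $m=4$ one gets $6\cdot E_3^{(2,1,0)}+1\cdot E_3^{(1,1,1)}=12+5=17$. For $m=5$ the partitions $\{3,3,0\}$, $\{3,2,1\}$, $\{2,2,2\}$ give $3\cdot 1+6\cdot 9+1\cdot 22=3+54+22=79$. For $m=6$ the partitions $\{4,4,2\}$ and $\{4,3,3\}$ give $3\cdot 6+3\cdot 16=18+48=66$, matching the claimed splittings.

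The main obstacle is the multigrading claim itself. The form 1) generators of Lemma \ref{lemma3} are parametrized by arbitrary $v\in V$, and expanding a generic $v=\sum c_i e_i$ produces relations that mix multiplicity classes, so one cannot directly read off multihomogeneity from them. Form 3) is what makes the argument work: its trilinearity in $u,v,w$ permits restriction to basis parameters without losing generators, and each such specialization is a sum of six basis monomials sharing one multiplicity profile.
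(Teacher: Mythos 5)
Your argument is correct and is essentially the derivation the paper intends: the appendix asserts without proof that a basis of ${\Lambda}^{S^2}_V(n+1)$ can be chosen inside $\mathcal{G}(n+1)$ and partitioned by multiplicity profiles, and then sums the computed values $E_n^{(k_1,k_2,k_3)}$ over the permutation orbit of each profile exactly as you do, with your verification of the multihomogeneity of the ideal via the symmetrized trilinear generators from part 3) of Lemma \ref{lemma3} filling in the step the paper leaves implicit. The only caveat is that Lemma \ref{lemma3} assumes $\mathrm{char}(k)\neq 2,3$, so your reduction to form 3) inherits that hypothesis (as does the paper's numerical computation, implicitly).
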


Notice that we have no information  about $E_6^{(5,5,5)}$ (or $dim({\Lambda}^{S^2}_V(7))$). The algorithm we used to find the above results requires a good amount of computational resources. For $n\leq 5$ we were able to compute $E_n^{(k_1,k_2,k_3)}$ on a regular desktop (total running time was less than one hour). However, when one tries to use the same method for $E_6^{(5,5,5)}$ we need to find the rank o matrix with  $\num{12409320}$ rows and $\num{756756}$ columns (that is about 25TB of data). This is way more than a regular computer can handle.

\bibliographystyle{amsalpha}

\begin{thebibliography}{A}



\bibitem
{bm}
B. R. Corrigan-Salter, and M. D. Staic, \textit{ Higher Order and Secondary Hochschild Cohomology},  C. R. Math. Acad. Sci. Paris, {\bf 354} (2016), no. 11, 1049--1054. 



\bibitem
{cs}
S. Carolus,  and M. D. Staic, 
\textit{$G$-Algebra Structure on the Higher Order Hochschild Cohomology $H^*_{S^2}(A,A)$}, arXiv:1804.05096


\bibitem
{f}
G. Floystad, \textit{The Exterior Algebra and Central Notions in Mathematics}, Notices of the AMS (4) {\bf 62} (2015), 364--371.




\bibitem
{hkr}
G. Hochschild, B. Kostant, and  A. Rosenberg, \textit{Differential forms on regular affine algebras}, Transactions AMS, {\bf 102} (1962), 383--408.



\bibitem
{la} J. Laubacher, \textit{Secondary Hochschild and Cyclic (co)homologies}, Ph.D. thesis,  (2017).



\bibitem
{lo} J. L. Loday, \textit{Cyclic Homology}, Springer-Verlag, Grundlehren der mathematischen Wissenschaften, {\bf 301} (1992).


\bibitem
{lo2} J. L. Loday, and B.  Vallette, \textit{Algebraic Operads}, Springer-Verlag, Grundlehren der mathematischen Wissenschaften, {\bf 346} (2012).



\bibitem
{mss} M. Markl, S. Shnider, and J. Stasheff, \textit{Operads in Algebra, Topology and Physiscs}, Mathematical Surveys and Monographs {\bf 96} (2002).


\bibitem
{p} T. Pirashvili, \textit{Hodge decomposition for higher order Hochschild homology}, Ann. Sci. Ecole Norm. Sup., (4) {\bf 33} (2000), 151--179.



\bibitem
{sta} M. D. Staic, \textit{Secondary Hochschild Cohomology}, Algebras and Representation Theory,  {\bf 19} Issue 1 (2016), pp 47--56.



\bibitem
{s3} M. D. Staic and A. Stancu, 
\textit{Operations on the Secondary  Hochschild  Cohomology},  Homology, Homotopy and Applications,  {\bf 17} (2015), 129-146. 



\bibitem
{vo}
 A. A. Voronov, \textit{The Swiss-Cheese Operad},  Contemporary Mathematics, {\bf 239} (1999), 365--373.


\end{thebibliography}

\end{document}